\newtheorem{theorem}{Theorem}[section]
\theoremstyle{definition}
\theoremstyle{remark}
\theoremstyle{notation}
\newtheorem{notation}[theorem]{Notation}
\theoremstyle{prop}
\theoremstyle{coexample}
\theoremstyle{definition}
\newlength{\LL}\settowidth{\LL}{5000}
\title{ \Huge \bf Hessian estimates for Lagrangian mean curvature equation with sharp Lipschitz phase}
\author{Xingchen Zhou\footnote{zxc3zxc4zxc5@stu.xjtu.edu.cn}
~~ \\ {\it Department of Mathematical Sciences} \\ {\it Tsinghua University} 
}
\date{\today}
\begin{document}
\renewcommand{\tablename}{Tabla}

\maketitle

\hrulefill

\begin{abstract}
We establish a prior interior $C^{1,1}$ estimates for convex solutions and supercritical phase solutions to the Lagrangian mean curvature equation with sharp Lipschitz phase. Counter-examples exist when the phase is H\"{o}lder continuous but not Lipschitz. As an application we obtain interior $C^{2,\alpha}$ regularity for $C^0$ viscosity solutions on the first phase interval $((n-2)\frac\pi2,n\frac\pi2)$.
\end{abstract}



\section{Introduction}
This work studies the Hessian estimates for the
Lagrangian mean curvature equation with Lipschitz phase $\phi$ in general dimensions $n\ge 3$:
\begin{equation}
\label{gslag1}
 \arctan {\lambda_1}+\arctan {\lambda_2}+\ldots+\arctan {\lambda_n}= \phi(x),
\end{equation}
 where $u$ will denote a solution, $\lambda_1\ge\lambda_2\ldots\ge \lambda_n$ are the eigenvalues of $D^2u$. It is the potential equation for the Lagrangian graph $(x,Du)$ with mean curvature vector $H=J\nabla_g\phi$, where $J$ is the complex structure (Harvey-Lawson \cite{HL82}, Proposition 2.17). When $\phi$ is a constant, the Lagrangian graph is a volume minimizing surface, (\ref{gslag1}) is then called special Lagrangian equation. Following Yuan \cite{Yua06}, we call the phase $|\phi|= \frac\pi2(n-2)$ critical ("$>$" supercritical and "$<$" subcritical) since the level set $\{\lambda\in \mathbb{R}^n|\arctan\lambda=c\}$ is convex only when $|c|\ge \frac\pi2(n-2)$.\\


Hessian estimates for (\ref{gslag1}) were obtained on critical and supercritical phases with $\phi\in C^{1,1}$ in Bhattacharya \cite{Bha20a}, Lu \cite{Lu22}. In \cite{Bha20a} the author also gave singular examples with H\"{o}lder continuous $\phi$. Meanwhile, on critical phase even the gradient estimates with Lipschitz $\phi$ are not known, related results with $\phi\in C^{1,1}$ are in Bhattacharya-Mooney-Shankar \cite{BMS22}. Our main theorems are the follows:


\begin{theorem}[Convex solution] \label{Thm_Upbd_n}
Let $u$ be a smooth convex solution to equation (\ref{gslag1}) in $B_{4}(0)$. Suppose that $\phi(x)\in[0,\frac\pi2 n)$ is smooth. Then we have
$$
|D^2u(0)|\le C(n, \mathrm{osc}_{B_{4}(0)} u, ||\phi||_{C^{0,1}( B_{4}(0))}).
$$
\end{theorem}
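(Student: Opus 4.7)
The plan is to work on the Lagrangian submanifold $M = \{(x, Du(x))\}$ with induced metric $g_{ij} = \delta_{ij} + \sum_k u_{ik}u_{kj}$, inverse $g^{ij}$, and mean curvature vector $H = J\nabla_g \phi$ (Harvey--Lawson), so that $|H|_g = |\nabla \phi|$ is bounded by $\|\phi\|_{C^{0,1}}$ almost everywhere. Convexity together with $\phi \in [0, n\pi/2)$ forces all $\lambda_i \in [0,\infty)$, which is the ingredient that will let me give definite signs to several otherwise sign-indefinite terms. Differentiating (\ref{gslag1}) once in the direction $e_k$ yields the only identity that survives for Lipschitz phase,
$$g^{ij} u_{ijk} = \phi_k, \qquad \text{i.e.}\quad \Delta_g u_k = \phi_k,$$
and this identity, together with $\Delta_g x_k = 0$, will be the only place where $\phi$ enters pointwise.

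\textbf{Jacobi-type inequality with only one derivative of $\phi$.} The heart of the proof is to establish, for a smooth approximation $b$ of $\log \lambda_{\max}$ (for instance $b = \tfrac{1}{2p}\log\sum_i \lambda_i^{2p}$ with $p$ large), an inequality of the schematic form
$$\Delta_g b \;\ge\; c(n)\,|\nabla_g b|^2 \;-\; C(n)\,|\nabla \phi|^2$$
in the distributional sense. A naive second differentiation of the arctan equation would instead produce a term $g^{ij}\phi_{ij}$, which is not available here. To avoid this, the plan is to never differentiate $\phi$ twice pointwise: apply the inequality only in integrated form, tested against $\eta^{2}\,e^{qb}$ for a graph cutoff $\eta$, and integrate the would-be $g^{ij}\phi_{ij}$ term by parts on $M$, transferring one derivative onto the cutoff and the metric quantities. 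Because $\lambda_i \ge 0$, the cross terms and boundary contributions produced by this integration by parts can be absorbed, leaving a clean inequality in which only $|\nabla \phi| \in L^\infty$ ever appears.

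\textbf{Moser iteration and closing the estimate.} Since $|H|_g \le \|\phi\|_{Lip}$, the Michael--Simon Sobolev inequality holds on $M$ with constants depending only on $n$ and $\|\phi\|_{C^{0,1}}$. A standard Moser iteration applied to the integrated Jacobi inequality above then yields
$$\sup_{B_{1}}\lambda_{\max} \;\le\; C\bigl(n,\|\phi\|_{C^{0,1}}\bigr)\,\|\lambda_{\max}\|_{L^{1}(B_{2})}^{\alpha} + C$$
for some $\alpha>0$. The required $L^{1}$ bound on $\lambda_{\max}$ is supplied by convexity: $|D^2u|\le \Delta u$ pointwise, and integrating by parts against a cutoff gives $\int_{B_{3}}\Delta u \le C(n)\,\mathrm{osc}_{B_{4}} u$. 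Combining these steps yields the Hessian bound at the origin.

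\textbf{Main obstacle.} The principal difficulty is arranging Step~2 so that only the Lipschitz norm of $\phi$ (and not any second derivative) enters the final estimate; this sharpness is exactly what distinguishes the theorem from the $C^{1,1}$-phase results of \cite{Bha20a, Lu22} and is consistent with the H\"older counter-examples in \cite{Bha20a}. The convexity hypothesis is what allows the algebra to close: without it, the sign-indefinite cross terms produced by the integration by parts cannot be absorbed into $|\nabla_g b|^2$.
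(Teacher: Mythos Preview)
Your overall architecture---Jacobi-type inequality, handle the second-order $\phi$ term by integration by parts, Michael--Simon, then close---is the paper's. But the two central steps do not work as you have written them.

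\textbf{The $\phi$-term and why the trace is essential.} With $b\approx\log\lambda_{\max}$, the second-order $\phi$ contribution to $\overline\Delta_g b$ is not $g^{ij}\phi_{ij}$: substituting the twice-differentiated equation produces $\lambda_1^{-1}\phi_{11}$, a single directional second derivative in the \emph{point-dependent} maximal eigendirection. This is not a divergence, and integrating it by parts forces you to differentiate the eigenframe, generating $D^3u$ terms divided by eigenvalue gaps that you have no way to control. The paper's decisive choice is $b=\ln(A+\Delta u)$: then the $\phi$-term is exactly the flat Laplacian $\Delta\phi$, a genuine divergence independent of any eigenframe. Integrating $\int\frac{\Delta\phi}{A+\Delta u}\,\varphi\,V\,dx$ by parts leaves residuals $D\phi\cdot D\varphi$, $D\phi\cdot DV$, and $D\phi\cdot D\Delta u$; the latter two still carry $D^3u$, but only in the specific combinations $\sum_{i,\gamma}\lambda_i g^{ii}g^{ii}u_{ii\gamma}^2$ and $|\nabla_g\Delta u|^2$. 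The paper's Step~1---the case-by-case verification that the quadratic form $Q_\gamma\ge 0$ for a suitably large $A=A(n)$---is precisely what guarantees these are dominated by the retained positive term $[\ ]_h$. Your schematic inequality keeps only $c|\nabla_g b|^2$ on the right side, which is not enough to absorb $D\phi\cdot DV$.

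\textbf{Closing: the volume form.} Moser or mean-value arguments on the graph via Michael--Simon produce integrals against $dv_g=V\,dx$ with $V=\prod_i\sqrt{1+\lambda_i^2}$, not flat $L^1$ norms; your bound $\int_{B_3}\Delta u\le C\,\mathrm{osc}\,u$ controls the wrong quantity. The paper closes as in Wang--Yuan: write $\sum_i Vg^{ii}=\sum_{k}c_k\sigma_k(D^2u)$, use the divergence structure $k\sigma_k=\mathrm{div}(L_{\sigma_k}Du)$, and iterate down to powers of $|Du|$, absorbing the $|\nabla_g b|$ term produced at each step back into the integrated Jacobi inequality.
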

\begin{theorem}[Supercritical phase solution] \label{Thm_Upbd_critical}
Let $u$ be a smooth solution to equation (\ref{gslag1}) in $B_{4}(0)$. Suppose that $\phi(x)\in[\frac\pi2(n-2)+\theta,\frac\pi2 n)$ for some constant $\theta>0$, $\phi(x)$ is smooth. Then we have
$$
|D^2u(0)|\le C(n,\theta, \mathrm{osc}_{B_{4}(0)} u, ||\phi||_{C^{0,1}( B_{4}(0))}).
$$
\end{theorem}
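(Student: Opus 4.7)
The plan is to estimate $b := \log\sqrt{1+\lambda_{\max}^2}$ on the Lagrangian submanifold $M = \{(x,Du(x)) : x \in B_{4}\}$ equipped with induced metric $g = I + (D^2u)^2$, following the scheme standard since Warren--Yuan: (i) establish a Jacobi-type differential inequality for $b$ on $M$; (ii) convert it into an $L^p$-to-$L^\infty$ estimate via the Michael--Simon Sobolev inequality and Moser iteration; (iii) close the argument with a volume/trace bound in terms of $\mathrm{osc}_{B_{4}}u$. The new difficulty relative to \cite{Bha20a,Lu22} is that $\phi$ is only Lipschitz, so every appearance of $D^2\phi$ in the existing computations must be eliminated; the estimate has to be driven by $\|D\phi\|_{L^\infty}$ alone.

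The first step is to set up the geometry: since $u$ solves \eqref{gslag1}, the Lagrangian graph has mean curvature vector $H = J\nabla_g\phi$, yielding the identities $\Delta_g x_k = -g^{ij}u_{ijk}\cdot\phi$-terms and $\Delta_g u_k = \phi_k$ along $M$, and hence a pointwise formula for $\Delta_g b$ in terms of $D^3u$, $D^2u$, and derivatives of $\phi$. The supercritical hypothesis $\phi \ge (n-2)\pi/2+\theta$ gives uniform convexity of the level set $\{\sum \arctan\lambda_i = \phi\}$, concretely $\lambda_{n-1}+\lambda_n\ge c(\theta)>0$ and hence $\lambda_i+\lambda_j\ge c(\theta)$ for all $i\neq j$. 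Combined with the Codazzi equations, the goal is a Jacobi inequality of schematic form
$$\Delta_g b \;\ge\; \varepsilon\,|\nabla_g b|_g^2 \;-\; C(n,\theta)\,|D\phi|^2,$$
in which any term linear in $D^2\phi$ that naively appears is either absorbed using the supercritical pinching or packaged as a divergence to be removed by integration by parts.

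Once the Jacobi inequality is in place, the plan is a Moser iteration on $M$: test against $\eta^{2q}e^{pb}$, integrate by parts (so residual derivatives of $\phi$ land on the cut-off $\eta$ rather than on $\phi$), and invoke the Michael--Simon Sobolev inequality to obtain
$$\sup_{B_1\cap M} e^b \;\le\; C\bigl(n,\theta,\|D\phi\|_{L^\infty}\bigr)\,\Bigl(\int_{B_{3}\cap M} e^{b}\,dV_g\Bigr)^{\alpha}$$
for some $\alpha>0$. The right-hand side must then be bounded by $\mathrm{osc}_{B_{4}} u$ and $\|\phi\|_{C^{0,1}}$; this is the Chou--Wang / Warren--Yuan volume-type estimate, obtained by taking the divergence on $M$ of a vector field built from $u$ (for instance along the lines of $x\cdot Du-2u$). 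Since that argument uses \eqref{gslag1} only pointwise, it transfers verbatim to the Lipschitz setting.

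The main obstacle is the first step: producing the Jacobi inequality under only a Lipschitz assumption on $\phi$. The proofs in \cite{Bha20a,Lu22} obtain $\Delta_g b \ge \varepsilon|\nabla_g b|_g^2 - C(|D\phi|^2+|D^2\phi|)$ and then close via $\|\phi\|_{C^{1,1}}$. To achieve the sharp Lipschitz version, one must either identify a cancellation in the computation of $\Delta_g b$ that eliminates $D^2\phi$ — presumably activated by the supercritical pinching $\lambda_{n-1}+\lambda_n\ge c(\theta)$, which degenerates exactly at the critical phase and would explain why the hypothesis $\theta>0$ is essential — or rewrite the offending $D^2\phi$ term as a divergence $\mathrm{div}_g(\nabla_g\phi \cdot \cdots)$ so that it can be shifted onto the cut-off during Step (ii). Overcoming this is the only genuinely new ingredient; the remainder of the argument is an adaptation of the established Warren--Yuan / Bhattacharya scheme to the Lipschitz framework that results.
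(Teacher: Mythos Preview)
Your overall three-step plan (Jacobi inequality on the Lagrangian graph, Michael--Simon mean-value/Sobolev, then a Wang--Yuan volume reduction via the divergence structure of $\sigma_k$) is exactly the paper's architecture, and you have correctly isolated the one new obstacle: the $D^2\phi$ term. But the proposal has a genuine gap at precisely that point, because the choice $b=\ln\sqrt{1+\lambda_{\max}^2}$ is the wrong test quantity for the Lipschitz regime.

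When you run the Jacobi computation for $\lambda_{\max}$, the second-order phase term that appears is $\phi_{11}$, computed in the frame that diagonalises $D^2u$ at the given point. That frame rotates from point to point, so $\phi_{11}$ is \emph{not} a fixed-coordinate divergence and cannot simply be thrown onto a cut-off by integration by parts; nor does the supercritical pinching $\lambda_{n-1}+\lambda_n\ge c(\theta)$ produce any cancellation of it. This is why the existing $\lambda_{\max}$ arguments in \cite{Bha20a,Lu22} are forced to keep a $|D^2\phi|$ term. The paper's resolution is to replace $\lambda_{\max}$ by the \emph{trace} and work with $b=\ln(A+\Delta u)$ for a large constant $A=A(n,\theta)$. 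Because one now sums over all $\gamma$ in the twice-differentiated equation, the bad term is exactly the Euclidean Laplacian $\Delta\phi$, which \emph{is} a fixed-coordinate divergence and can be integrated by parts against the cut-off (Step~2 of the paper). So your second suggestion---``rewrite the offending $D^2\phi$ term as a divergence''---is the one that works, but it only becomes available after switching to the trace; with $b=\ln\sqrt{1+\lambda_{\max}^2}$ neither of your two proposed fixes goes through.

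Two further points you will need once you make that switch. First, the trace Jacobi inequality has to be proved with a strictly positive third-order remainder on the right, not merely $\varepsilon|\nabla_g b|^2$: the paper obtains
\[
\Delta_g(A+\Delta u)-(1+\epsilon)\frac{|\nabla_g\Delta u|^2}{A+\Delta u}\ \ge\ \hat\epsilon\sum_{i,\gamma}|\lambda_i|\,g^{ii}g^{ii}u_{ii\gamma}^2\ -\ \hat C\,(A+\Delta u)|D\phi|^2\ +\ \Delta\phi,
\]
and that extra quadratic $\sum|\lambda_i|g^{ii}g^{ii}u_{ii\gamma}^2$ is what absorbs the new error terms $D\phi\cdot DV$ and $D\phi\cdot D\Delta u$ produced when you integrate $\Delta\phi$ by parts. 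Proving nonnegativity of the relevant quadratic form $Q_\gamma$ under the supercritical hypothesis is where $\theta$ enters and is the technical heart of the paper's proof; it requires a case analysis on the size of $\lambda_n$ and $\lambda_{n-1}$ and the substitution $u_{nn\gamma}\mapsto u_{nn\gamma}-\phi_\gamma g_{nn}$ to exploit $\sum_i g^{ii}u_{ii\gamma}=0$. Second, the paper does not run a Moser iteration but uses the Michael--Simon mean-value inequality directly (choosing the monotone radial test function $\varphi=\int_r^\infty t\chi(\rho-t)\,dt$); this is a minor stylistic difference and your Moser scheme would also close once the correct Jacobi inequality is in hand.
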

A natural application of these estimates is the interior regularity for $C^0$ viscosity solutions. We have the following theorem:
\begin{theorem}[Interior Regularity on $I_1$] \label{Thm_Upbd_nGloabl}
Let $u$ be a $C^0$ viscosity solution to equation (\ref{gslag1}) in $ B_{1}(0)$. Suppose that $\phi(x)\in(\frac\pi2(n-2),\frac\pi2n)$,  $\phi(x) \in {C^{0,1}(B_1(0))}$, then $u\in C^{2,\alpha}(B_1(0))$ for each $\alpha\in(0,1)$. Further if $\phi(x)\in[\frac\pi2(n-2)+\theta,\frac\pi2 n)$ for some constant $\theta>0$, then
$$
||u||_{C^{2,\alpha}( B_{\frac12}(0))}\le C(n, \alpha,\theta, \mathrm{osc}_{B_{\frac34}(0)} u, ||\phi||_{C^{0,1}(\overline B_{\frac34}(0))}).
$$
\end{theorem}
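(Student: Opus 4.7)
The plan is to combine the a priori estimate of Theorem \ref{Thm_Upbd_critical} with an approximation scheme, then to bootstrap from $C^{1,1}$ to $C^{2,\alpha}$ via an Evans-Krylov-type argument and differentiation of the equation. Fix $x_0\in B_1(0)$; since $\phi$ is continuous with $\phi(x_0)>(n-2)\pi/2$, choose $r>0$ and $\theta>0$ so that $\phi\ge (n-2)\pi/2+2\theta$ on $\overline{B_{2r}(x_0)}$. Mollify $\phi$ to $\phi_\epsilon\in C^\infty$ with $\|\phi_\epsilon\|_{C^{0,1}}\le\|\phi\|_{C^{0,1}}$ and $\phi_\epsilon\in[(n-2)\pi/2+\theta,\,n\pi/2)$ on $B_{3r/2}(x_0)$, and mollify the continuous trace $u|_{\partial B_r(x_0)}$ to a smooth datum $u_\delta$. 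Solve the Dirichlet problem $\sum\arctan\lambda_i(D^2u_{\epsilon,\delta})=\phi_\epsilon$ in $B_r(x_0)$ with boundary value $u_\delta$; solvability on the supercritical branch is classical, via the continuity method combined with the convexity of the level sets.

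Applying Theorem \ref{Thm_Upbd_critical} (after rescaling) to each smooth solution $u_{\epsilon,\delta}$ yields
\[
\|D^2 u_{\epsilon,\delta}\|_{L^\infty(B_{r/2}(x_0))}\le C\bigl(n,\theta,\mathrm{osc}_{B_r(x_0)}u,\|\phi\|_{C^{0,1}}\bigr),
\]
with $C$ independent of $\epsilon$ and $\delta$. Arzel\`a-Ascoli extracts a subsequential $C^{1,\alpha}_{\mathrm{loc}}$ limit $v\in C^{1,1}$, and the comparison principle for the Lagrangian equation on the supercritical branch (where the operator is uniformly elliptic on bounded sets and admits a concave reformulation) identifies $v=u$. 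Covering $B_1(0)$ by such balls shows $u\in C^{1,1}_{\mathrm{loc}}(B_1(0))$; when $\phi\ge(n-2)\pi/2+\theta$ globally, taking $r$ comparable to $\mathrm{dist}(\cdot,\partial B_{3/4}(0))$ propagates the bound quantitatively to $B_{1/2}(0)$.

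For the $C^{2,\alpha}$ upgrade, once $D^2u$ is essentially bounded, $F(M)=\sum\arctan\lambda_i(M)$ is uniformly elliptic on the range of $D^2u$, with ellipticity constants controlled by the $C^{1,1}$ norm. Since the superlevel sets $\{F\ge c\}$ are convex on the supercritical branch, one can apply the Evans-Krylov theorem with a Lipschitz right-hand side (freezing coefficients absorbs the $\phi(x)-\phi(x_0)$ increment as a small perturbation) to obtain $u\in C^{2,\alpha_0}$ for some $\alpha_0\in(0,1)$. Differentiating the equation in the direction $e_k$ gives
\[
F_{ij}(D^2u)\,\partial_{ij}(\partial_k u)=\partial_k\phi\in L^\infty,
\]
a linear uniformly elliptic equation with $C^{\alpha_0}$ coefficients and bounded right-hand side. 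Calder\'on-Zygmund $W^{2,p}$ theory then places $\partial_k u\in W^{2,p}_{\mathrm{loc}}$ for every $p<\infty$, and Sobolev embedding delivers $u\in C^{2,\beta}_{\mathrm{loc}}$ for every $\beta\in(0,1)$, with the quantitative estimate tracked through each step under the uniform lower bound on $\phi$.

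The main obstacle is the Evans-Krylov step: while Hessian boundedness makes the equation uniformly elliptic, $F$ itself is only quasi-concave on the supercritical branch, so a direct invocation of Evans-Krylov is not available. I expect to resolve this via the concave reformulation already used in Bhattacharya \cite{Bha20a} and Lu \cite{Lu22} for their $C^{1,1}$ theorems, which supplies a genuinely concave operator on the supercritical range; the Lipschitz control on $\phi$ enters only as a bounded inhomogeneity in the perturbation argument. The identification of the approximating limit with the prescribed viscosity solution likewise rests on the comparison principle, which transfers to this reformulated concave operator.
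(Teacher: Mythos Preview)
Your strategy matches the paper's: approximate $\phi$ by smooth $\phi_k$, solve Dirichlet problems on a subball, apply Theorem~\ref{Thm_Upbd_critical} for uniform interior $C^{1,1}$ bounds, and use the supercritical comparison principle (Dinew--Do--T\^o, Cirant--Payne, Harvey--Lawson) to identify the limit with $u$. Two points of divergence are worth noting.

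First, the paper does \emph{not} mollify the boundary trace: it feeds $u$ itself as continuous Dirichlet data into the existence theory of Bhattacharya~\cite{Bha20b} and Lu~\cite{Lu22}, which produces interior-smooth solutions $u^k$ with $u^k=u$ on $\partial B_{3/4}$. The comparison barriers $u^k\pm\delta\bigl[|x|^2-(3/4)^2\bigr]$ then match $u$ exactly on the boundary, so the identification $u^k\to u$ is immediate. Your mollification $u_\delta$ forces an additional limit in $\delta$ and a separate continuous-dependence argument before comparison can be invoked; this works but is an avoidable detour.

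Second, and more substantively, the paper obtains the $C^{2,\alpha}$ estimate \emph{on the smooth approximants} $u^k$---via the a~priori results of Safonov~\cite{Saf84,Saf89} and Caffarelli~\cite{Caf89} combined with the convex-level-set structure of Caffarelli--Yuan~\cite{CY00} and Yuan~\cite{Yua06}---and then passes to the limit. You instead first pass to the limit to get $u\in C^{1,1}$ and only afterward bootstrap via Evans--Krylov and differentiation. Your route is viable (the difference-quotient version of the differentiated equation makes the $W^{2,p}$ step rigorous), but it is precisely this ordering that creates the ``main obstacle'' you flag: running Evans--Krylov on a merely $C^{1,1}$ viscosity solution of a quasi-concave operator. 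The paper's ordering sidesteps this entirely, since every $u^k$ is smooth and the cited $C^{2,\alpha}$ theory applies without further reformulation.
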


Equation (\ref{gslag1}) is concave when $u$ is convex, the classical theorems of Evans-Krylov-Safonov (\cite{Eva82} \cite{Kry83} \cite{Saf84} \cite{Saf89}) give $C^{2,\alpha}$ estimates, see also Caffarelli-Yuan \cite{CY00} which extends the convexity condition. Yuan \cite{Yua01} further removes all convexity constrains for three dimensional special Lagrangian equations, using a VMO argument that can be automatically pushed to H\"{o}lder right hand sides. Unfortunately for convex viscosity solutions we lack an effective way of approximation that prevents us from deriving regularity. On supercritical phases, we can do better due to the existence of comparison principle (Dinew-Do-T\^o \cite{DDT19}, Cirant-Payne \cite{CP21} and Harvey-Lawson \cite{HL21}), and correspondingly the solvability of the Dirichlet problem with $C^{2,\alpha}$ solutions (Bhattacharya \cite{Bha20b}, Lu \cite{Lu22}). The a prior interior $C^{2,\alpha}$ estimates come from Safonov \cite{Saf84} (also \cite{Saf89}), in an exquisite proof of assertion \textbf{A)}. Caffarelli \cite{Caf89} included this result using another approach which applies to $W^{2,p}$ estimates as well, with the help of Caffarelli-Yuan \cite{CY00} and Yuan \cite{Yua06}, Lemma 2.1. Both techniques relay on the solvability of the Dirichlet problem for special Lagrangian equations (Caffarelli-Nirenberg-Spruck \cite{CNS85}, Yuan \cite{Yua16}). The $C^{2,\alpha}$ regularity is then obtained by standard approximating process.\\

We remark that gradient estimates for quasilinear mean curvature equations with Lipschitz right hand side were proved in Ladyzhenskaya-Ural'tseva \cite{LU70}, Simon \cite{Sim76}, Trudinger \cite{Tru73}. As a comparison, Theorem \ref{Thm_Upbd_n}-\ref{Thm_Upbd_nGloabl} do not depend on the derivatives of the mean curvature vector (but $\phi\in C^{1,1}$ will do).\\
%

Heinz \cite{Hei59} derived a Hessian bound for Monge-Amp\`ere type equation including equation (\ref{gslag1}) in dimension 2, Pogorelov \cite {Pog64} derived the Hessian estimates with $\phi\ge\frac\pi2$. Gregori \cite{Gre94} treated the two dimensional system problem with additional assumption on the mean curvature vector. Pogorelov \cite{Pog78} constructed his famous counter-examples
to three dimensional Monge-Amp\`ere equation $\det D^2u=1$ which serve as counter-examples
for cubic and higher order symmetric $\sigma_k$ equations, see Urbas \cite{Urb90}. Pogorelov \cite{Pog78} and Chou-Wang \cite{CW01} established Hessian estimates to Monge-Amp\`ere equations and $\sigma_k$ equations for $k\ge 2$ with certain strict convexity constraints. Trudinger \cite{Tru84} and
Urbas \cite{Urb00} \cite{Urb01}, also Bao-Chen \cite{BC03} obtained (pointwise) Hessian estimates in
terms of certain Hessian integrals, for $\sigma_k$ equations and special Lagrangian
equation ((\ref{gslag1}) with $n = 3$, $\phi=\pi$) respectively. Bao-Chen-Guan-Ji \cite{BCGJ03} obtained pointwise Hessian estimates to
strictly convex solutions for quotient equations $\sigma_n/\sigma_k$ in terms of
certain integrals of the Hessian. In dimension 3, pointwise Hessian estimates for $(\ref{gslag1})$ with $\phi=\frac\pi2$ were obtained in Warren-Yuan \cite{WYb09}. In general dimensions, Hessian estimates to convex solutions for equation (\ref{gslag1}) with constant phases were obtained in Chen-Warren-Yuan \cite{CYW09}, Hessian estimates on constant critical and supercritical phases were obtained in Wang-Yuan \cite{WY11}. Singular solutions on constant subcritical phases are in Nadirashvili-Vl$\breve{a}$du$\c{t}$ \cite{NV10} and Wang-Yuan \cite{WY13}, and more recently Mooney-Savin \cite{MS23} for non $C^1$ examples. Qiu \cite{Qiu17} also Guan-Qiu \cite{GQ19} proved Hessian estimates for
quadratic Hessian equations with a $C^{1,1}$ variable right hand side. Hessian estimates for semi-convex solutions to $\sigma_2=1$ were
derived by a compactness argument in McGonagle-Song-Yuan \cite{MSY19}. Shankar-Yuan \cite{SY20a} \cite{SY20b} \cite{SY22} proved regularity for almost convex viscosity solutions, semi-convex solutions and semi-convex entire solutions to $\sigma_2=1$ respectively. Mooney \cite{Moo21} proved regularity for convex viscosity solutions to $\sigma_2=1$ using a new approach. Bhattacharya \cite{Bha20a} \cite{Bha22} and Lu \cite{Lu22} proved Hessian estimates for equation (\ref{gslag1}) with $C^{1,1}$ critical and supercritical phases, Bhattacharya-Shankar \cite{BS20} proved $C^{2,\alpha}$ regularity for $C^{1,\beta}$ convex viscosity solutions with $C^\alpha$ phases.\\

To give an heuristic thought of the proof, let us first review the constant phase situation treated in Warren-Yuan \cite{WYb09}. The idea is to find a subharmonic function of $D^2u$, say $b=\ln\sqrt{1+\lambda_{\max}^2}$, and further prove it satisfies a Jacobi inequality. Subharmonicity gives a Michael-Simon mean integral bound of $b$ on the Lagrangian graph, the Jacobi inequality allows the use of Sobolev inequality to reduce $\oint b$ to $g-$ weighted integral by controlling the gradient. An important observation made in Warren-Yuan \cite{WYb09} (Wang-Yuan \cite{WY11} for general dimensions) is that the volume element admits a divergence structure, then an iteration process leads to the final conclusion.\\

The above approach needs the forth order derivatives of $u$, which means a twice differentiation of the equation. When $\phi$ is a constant the process is harmless, in general the second order derivatives of $\phi$ will appear. To eliminate it, an intuitive thought is to assemble a divergence structure of $\phi$, which requires a new subharmonic function of $D^2u$ in large. We find that $\Delta u$ is a promising one that leaves behind the term $\Delta \phi$ in the Jacobi inequality. Under integral sense $\Delta \phi$ can be reduced to $D\phi$, the extra terms are controlled by a positive quadratic form in the trace Jacobi inequality. Then we can go through the Wang-Warren-Yuan process and get the desired estimates.\\

As one can see, the above arguments date back to the original integral proof of the gradient estimates for minimal surface equations by Bombieri-De Giorgi-Miranda \cite{BDM69}, and for elliptic equations in divergence form by Ladyzhenskaya-Ural'tseva \cite{LU70}, Simon \cite{Sim76}. The above works used test function arguments together with a Sobolev inequality. A simplified proof for quasilinear mean curvature equations was given in Trudinger \cite{Tru72} \cite{Tru73}. Also notice that in \cite{Tru73} a Jacobi inequality was introduced for the gradient.\\

\begin{notation}
\indent The following notations will be used throughout this paper.
  \\ \indent  $u_i=\partial_iu,$ $u_{ij}=\partial_{ij}u,$ and $\lambda_{i,j}=\partial_j\lambda_i, \lambda_{i,jk}=\partial_{jk}\lambda_i$, etc.
   \\ \indent $V=\Pi^n_{i=1}\sqrt{1+\lambda_i^2}.$
   \\ \indent $C=C(\diamond)$ are constants that depend on $\diamond$ unless specified previously.
\end{notation}

\section{Interior Hessian estimates with Lipschitz phase}
We first introduce the geometric formulas that will be used in our proof. One can refer to the original work of Harvey-Lawson \cite{HL82}, or more recently Bhattacharya \cite{Bha20a}, Section 2.2 for related calculations.\\

Denote $(M,g)=(x,Du)$ the Lagrangian graph with an induced metric $g$ from $(\mathbb{R}^n\times \mathbb{R}^n,dx^2+dy^2$). Choose a coordinate system such that $D^2u$ is diagonal at a point $p\in \mathcal{M}$, the induced metric is
$ g_{ij}=\delta_{ij}+Du_i\cdot Du_j\stackrel{p}{=}(1+\lambda_i^2)\delta_{ij}.$
We also denote $[g^{ij}]=[g_{ij}]^{-1}$ the inverse of the metric.
The Beltrami-Laplace operator on $\mathcal{M}$ is
$\Delta_g =\frac{1}{\sqrt g}\partial_j(\sqrt gg^{ij}\partial_i)\stackrel{p}{=}g^{ii}\partial_{ii}-g^{ii}\lambda_i\phi_i\partial_i.$ The mean curvature of $\mathcal{M}$ is $H=J\nabla_g\phi$ where $J$ is a $\frac\pi 2$ rotation matrix in $\mathbb{R}^n\times \mathbb{R}^n$.

\subsection {Estimates for convex solutions: proof to Theorem \ref{Thm_Upbd_n}}
\begin{proof}
By approximation we suppose that the eigenvalues of $D^2u$ are pairwise distinct at $p\in \mathcal{M}$. Choose a coordinate system such that $D^2u$ is diagonal at $p$,
\begin{equation}\label{eq_2}
\begin{aligned}
\lambda_{\gamma,ii}&=\sum^n_{s,t=1}\frac{\partial\lambda_\gamma}{\partial u_{st}}
                     \frac{\partial^2u_{st}}{\partial x^2_i}+\sum^n_{s,t,k,l=1}\frac{\partial^2\lambda_\gamma}{\partial u_{st}\partial u_{kl}}\frac{\partial u_{st}}{\partial x_i}
                     \frac{\partial u_{kl}}{\partial x_i}\\
   &=\lambda_{i,\gamma\gamma}+2(\sum_{t\neq \gamma}\frac1{\lambda_\gamma-\lambda_t}u^2_{it\gamma}-\sum_{t\neq i}\frac1{\lambda_i-\lambda_t}u^2_{it\gamma}).
\end{aligned}
\end{equation}
Taking derivative of equation (\ref{gslag1}) with respect to $x_\gamma$ twice, we have at $p$

\begin{equation}\label{eq_3}
\sum^n_{i=1}\lambda_{i,\gamma\gamma}g^{ii}=\sum^n_{i=1}2\lambda_i\lambda_{i,\gamma}^2g^{ii}g^{ii}+\phi_{\gamma\gamma}.
\end{equation}
 Summing up (\ref{eq_2})$\cdot g^{ii} $ and (\ref{eq_3}), and denote $\Delta\cdot=\Delta u$, $\overline \Delta_g=g^{ij}\partial_{ij}\stackrel{p}{=}g^{ii}\partial_{ii}$,
\begin{equation}\label{eq_4}
\overline\Delta_g(\Delta\cdot)
=[\sum^n_{i,\gamma=1}2\lambda_ig^{ii}g^{ii}u^2_{ii\gamma}+\sum^n_{\gamma=1}\sum^n_{i\neq t}(\lambda_i+\lambda_t)g^{ii}g^{tt}u^2_{it\gamma}]_h+\Delta\phi.
\end{equation}
\textbf{Step 1.} We prove a weak trace Jacobi inequality. By (\ref{eq_4}) we have
\begin{align*}
|\nabla_g|A+\Delta\cdot||^2= \sum^n_{\gamma=1}g^{\gamma\gamma}(\sum^n_{i=1}u_{ii\gamma})^2,
\end{align*}
\begin{equation}\label{eq_9}
\begin{aligned}
&\overline\Delta_g|A+\Delta\cdot|-(1+\epsilon) \frac{|\nabla_g\Delta\cdot|^2}{A+\Delta\cdot}= [\ ]_h-
\frac{1+\epsilon}{A+\Delta\cdot}\sum^n_{\gamma=1}g^{\gamma\gamma}(\sum^n_{i=1}u_{ii\gamma})^2+\Delta \phi\\
&\ge \sum^n_{\gamma=1}\frac{g^{\gamma\gamma}}{A+\Delta\cdot}[\sum^n_{i=1}\frac{2\lambda_i(A+\Delta\cdot)}{1+\lambda_i^2}u^2_{ii\gamma}
-(1+\epsilon)(\sum^n_{i=1}u_{ii\gamma})^2]+\Delta \phi,
\end{aligned}
\end{equation}
where $\epsilon$ is a small constant. We consider the following quadratic form with the new representation $u_{nn\gamma}=u_{nn\gamma}-\phi_\gamma{ g_{nn}}$ which will give us a convenient equality $\sum^n_{i=1}u_{ii\gamma}g^{ii}=0$ at $p$ for all $1\le \gamma\le n$,
\begin{align}\label{eq_Q_gamma}
Q_{\gamma}&=\sum^n_{i=1}
\frac{2\lambda_i (A+\Delta\cdot)}{(1+\lambda_i^2)}u_{ii\gamma}^2
-(1+\frac14)(\sum^n_{i=1}u_{ii\gamma})^2.
\end{align}

We claim that $Q_\gamma\ge 0$ if we properly choose the constant $A=A(n)$. We will discuss according to the convexity of $u$. Let $C(n)$, $c(n)$ be some large/small constants depending on $n$ that will be decided later.\\

\textbf{Case 1.} $\lambda_n\ge c(n)$, the uniformly convex situation.

%

Let $e_i$ be the $i-$th unit vector in $\mathbb{R}^n$, $\eta_i=-\sqrt{a_n-a_i}$, $L=\frac{\sqrt5}{2}\sum^n_{i=1}e_i$. Consider the following $n\times n$ symmetric matrix,
\begin{equation}\label{eq_matrix_1}
\Lambda=a_nI_n-\sum^{n-1}_{i=1}(a_n-a_i)e_i^Te_i-L^TL.
\end{equation}
We suppose that $a_n> a_i>0$, $1\le i\le n-1$. The eigenvectors of $\Lambda$ are $\{\sum^{n-1}_{j=1}\alpha_{ij}(-\eta_j)e_j+\beta_i L\}^n_{i=1}$, we solve the following eigenvalue equation,

$$
\det(\left[
\begin{array}{llll}
 a_1\cdots & \cdots & 0 & \eta_1<L,e_1> \\
 \vdots   & \ & \vdots  &  \vdots \\
 0\cdots    & \cdots & a_{n-1} & \eta_{n-1}<L,e_{n-1}> \\
 \eta_1<L,e_1> & \cdots  & \eta_{n-1}<L,e_{n-1}>  & a_n-|L|^2
\end{array}
\right]-\lambda I_n)=0.
$$
We require that $\Lambda$ has no negative eigenvalue which equivalents to the following inequality,
$$
a_n-|L|^2-\sum^{n-1}_{i=1}\eta_i^2|<L,e_i>|^2a_i^{-1}=a_n-\sum^{n}_{i=1}\frac{a_n}{a_i}|<L,e_i>|^2\ge 0.
$$
Now to prove $Q_\gamma\ge0$, it is sufficient to show the above inequality with $a_i=\frac{2\lambda_i(A+\Delta\cdot)}{1+\lambda_i^2}$, assuming that $a_n$ is the largest one. If $a_n=a_i$ for some $i<n$, we may subtract $a_i$ by an arbitrarily small constant. We have for $A=A(n)$ large,
$$
1-\sum^{n}_{i=1}\frac{1}{a_i}|<L,e_i>|^2\ge 1-\frac{5(C(n)+\Delta\cdot)}{8(A+\Delta\cdot)}\ge 0.
$$
\textbf{Case 2.} $\lambda_i\ge c(n)$ for some $1\le i\le n-1$, the non-degenerate convex situation.
Now to prove $Q_\gamma\ge0$, we need to choose another sequence of $\{a_i\}^n_{i=1}$.
We split $\{\lambda_i\}^n_{i=1}$ into two parts,
$$
\left\{
\begin{array}{ll}
 (\lambda_1,\cdots,\lambda_k)    & \lambda_k\ge c(n)   \\
(\lambda_{k+1},\cdots ,\lambda_{n})   &  \lambda_{k+1}< c(n)     \\
\end{array} \right.
.
$$
Recall we have $\sum^n_{i=1} g^{ii}u_{ii\gamma}=0$ by reducing the derivative of the phase $\phi_\gamma$ to $u_{nn\gamma}$, we have the following relation,
\begin{equation}\label{eq_case_22}
\begin{aligned}
\sum^k_{i=1}\frac {u^2_{ii\gamma}}{(1+\lambda_i^2)^2}&\ge \frac1n(\sum^k_{i=1}\frac{ u_{ii\gamma}}{1+\lambda_i^2})^2=\frac1n(\sum^n_{i=k+1}\frac{u_{ii\gamma}}{1+\lambda_i^2})^2\\
& \ge \frac1{n^2}(\sum^n_{i=k+1}u_{ii\gamma})^2-n^2\sum^n_{i=k+1}\frac {\lambda_i^4u^2_{ii\gamma}}{(1+\lambda_i^2)^2}.
\end{aligned}
\end{equation}
We choose $A=A(n)$ large such that $c(n)A\lambda_i\ge n^4$ for $1\le i\le k$, $A\ge n^6c(n)$, by (\ref{eq_case_22}) we get
$$
\sum^k_{i=1}\frac{2c(n)\lambda_i(A+\Delta\cdot)}{1+\lambda_i^2}u^2_{ii\gamma}
+\sum^n_{i=k+1}\frac{2\lambda_i(A+\Delta\cdot)}{1+\lambda_i^2}u^2_{ii\gamma}\ge {2n^2}(\sum^n_{i=k+1}u_{ii\gamma})^2.
$$
Let $a_i=\min[\frac{2(1-c(n))\lambda_i(A+\Delta\cdot)}{1+\lambda_i^2}, 2n^2-1]$ for $1\le i\le k$, $a_{k+1}=2n^2$, we get
$$
Q_\gamma \ge \sum^k_{i=1}a_i u_{ii\gamma}^2 +a_{k+1}(\sum^n_{i=k+1}u_{ii\gamma})^2 -\frac54(\sum^n_{i=1}u_{ii\gamma})^2.
$$
Now we apply the argument for the quadratic form (\ref{eq_matrix_1}) in \textbf{Case 1},
$$
1-\sum^{k+1}_{i=1}\frac{1}{a_i}|<L,e_i>|^2\ge 1-\frac{5(C(n)+\Delta\cdot)}{8(1-c(n))(A+\Delta\cdot)}-\frac1n\ge 0.
$$
\textbf{Case 3.} $\lambda_1< c(n)$, the degenerate convex situation. In this case we immediately get $Q_\gamma\ge 0$ since
$$
(\sum^n_{i=1}u_{ii\gamma})^2=(\sum^n_{i=1}\frac{\lambda_i^2u_{ii\gamma}}{1+\lambda_i^2})^2\le \sum^n_{i=1}\frac{n\lambda_i^4}{(1+\lambda_i^2)^2}u_{ii\gamma}^2
\le \sum^n_{i=1}\frac{A\lambda_i}{(1+\lambda_i^2)}u_{ii\gamma}^2.
$$
The remaining step is to deal with the term $\phi_\gamma$ that we include in $Q_\gamma$. Let $\delta=\delta(n)$ small, for the original $u_{nn\gamma}$,
\begin{equation}\label{eq_15}
\begin{aligned}
&(\sum^{n}_{i=1}u_{ii\gamma})^2
\le (1+\delta ) (\sum^n_{i=1}u_{ii\gamma}-\phi_\gamma g_{nn})^2+\frac {C(n)}\delta|D\phi|^2 g^2_{nn},\\
&u^2_{nn\gamma}
\ge (1-\delta ) (u_{nn\gamma}-\phi_\gamma g_{nn})^2-\frac {C(n)}\delta|D\phi|^2 g^2_{nn}.
\end{aligned}
\end{equation}
Thus (\ref{eq_9}) becomes (with $\epsilon=\epsilon(\frac14)$, $\delta=\delta(\frac14)$ small)
$$
\overline\Delta_g|A+\Delta\cdot|-(1+\epsilon) \frac{|\nabla_g\Delta\cdot|^2}{A+\Delta\cdot}\ge \delta[\ ]_h - {C(n)} |A+\Delta\cdot||D\phi|^2 +\Delta \phi.
$$
There is still a second part of $\Delta_g|A+\Delta\cdot|$, namely
\begin{equation}\label{eq_150}
|(\Delta_g-\overline\Delta_g) (A+\Delta\cdot)|=|\sum^n_{i=1}g^{ii}\lambda_i\phi_iD_i\Delta\cdot|\le \frac\epsilon2 \frac{|\nabla_g \Delta\cdot|^2}{A+\Delta\cdot}+\frac{C(n)}\epsilon|A+\Delta\cdot||D\phi|^2
\end{equation}
with $\epsilon=\epsilon(\frac14)$. Thus we have (with a new $\epsilon$)
\begin{equation}\label{eq_10}
\Delta_g\ln|A+\Delta\cdot|-\epsilon{|\nabla_g\ln|A+\Delta\cdot||^2}\ge \frac{\delta[\ ]_h}{A+\Delta\cdot}+\frac{\Delta \phi}{{A+\Delta\cdot}}-C(n) |D\phi|^2.
\end{equation}
\textbf{Step 2.} We prove a mean value inequality.
We need to eliminate the term $\Delta\phi$. Let ${\mathcal{B}}_1(0)$ be the geodesic ball on $\mathcal{M}$ with radius 1. Let $\varphi(x)\in C_0^\infty(\mathcal{B}_1(0))$ be a nonnegative function. Notice that $\ln|A+\Delta\cdot|>1$ if $A\ge 3$, we have
\begin{align*}
&\int_{\mathcal{B}_1(0)}\ln|A+\Delta\cdot| \Delta_g \varphi dv_g\ge\int_{\mathcal{B}_1(0)}\frac{\delta[\ ]_h\varphi}{A+\Delta\cdot}dv_g -\int_{\mathcal{B}_1(0)}C(n)|D\phi|^2\varphi dv_g\\
&-\int_{\mathcal{B}_1(0)}\frac{D\varphi D\phi}{A+\Delta\cdot} dv_g-\int_{\mathcal{B}_1(0)}\frac{\varphi D\phi D V}{A+\Delta\cdot}dx +\int_{\mathcal{B}_1(0)}\frac{\varphi D\phi D\Delta\cdot}{(A+\Delta\cdot)^2}dv_g\\
&+\int_{\mathcal{B}_1(0)}\epsilon{|\nabla_g\ln|A+\Delta\cdot||^2}\varphi dv_g,
\end{align*}
where $V=\Pi^n_{i=1}\sqrt{1+\lambda_i^2}$. Pointwisely we can choose a coordinate system such that $D^2u$ is diagonal, and

\begin{align*}
|D\phi D V|&=|\sum^n_{i,\gamma=1}\frac{\lambda_i\lambda_{i,\gamma}}{1+\lambda_i^2}V D_\gamma\phi|
\le \frac14\delta\sum^n_{i,\gamma=1} {\lambda_i}g^{ii}g^{ii}u^2_{ii\gamma}V+C(\delta,||\phi||_{C^{0,1} (\mathcal{B}_1(0))})|A+\Delta\cdot| V,
\end{align*}
\begin{align*}
|D\phi D\Delta\cdot|&\le \frac12\epsilon|\nabla_g|A+\Delta\cdot||^2+C(\epsilon,||\phi||_{C^{0,1}(\mathcal{B}_1(0))})|A+\Delta\cdot|^2.
\end{align*}
Then we get for $C_*=C(\epsilon,\delta,||\phi||_{C^{0,1}(\mathcal{B}_1(0))})=C(n,||\phi||_{C^{0,1}(\mathcal{B}_1(0))})$
\begin{equation}\label{eq_11}
\int_{{\mathcal{B}}_1(0)}\ln|A+\Delta\cdot|\Delta_g\varphi dv_g\ge
-\int_{\mathcal{B}_1(0)}\frac{D\varphi D\phi}{A+\Delta\cdot} dv_g-C_*\int_{\mathcal{B}_1(0)} \varphi dv_g.
\end{equation}
Suppose that $Du(0)=0$, let $r=|(x,Du)|$, let $\chi(t)\in C^\infty(\mathbb{R})$ be a non-decreasing function such that $\chi(t)=0$ when $t\le0$, $\chi(t)=1$ when $t>\mu$ for some small constant $\mu>0$. Let $\rho<1$, we choose
$$
\varphi(x)=\int^{+\infty}_rt\chi(\rho-t)dt.
$$
We have $0\le\varphi\le \rho^2\chi(\rho-r)$, $|D\varphi(x)|\le C(n)\rho|A+\Delta\cdot|\chi(\rho-r)$, and
$$
\Delta_g\varphi(x)\le (-n+C(n)\rho|D\phi|)\chi(\rho-r)+\rho\chi'(\rho-r).
$$
Since $\ln|A+\Delta\cdot|>1$, (\ref{eq_11}) becomes
\begin{align*}
&\int_{\mathcal{B}_1(0)}\ln|A+\Delta\cdot|[n\chi(\rho-t)-\rho\chi'(\rho-r)] dv_g\le
C_*\int_{\mathcal{B}_1(0)}\rho \ln|A+\Delta\cdot|\chi(\rho-r)dv_g.
\end{align*}
Multiply the above inequality by $\rho^{-n-1}$,
\begin{align*}
-\frac d{d\rho}\ln[\frac 1{\rho^{n}}\int_{\mathcal{B}_1(0)}\ln|A+\Delta\cdot|\chi(\rho-r)dv_g]\le C_*.
\end{align*}
Integrate it over $(\sigma,\frac12)$ for some $\sigma>\mu$, since $\sigma,\mu$ are arbitrarily small constants,
\begin{equation}\label{eq_12}
\ln|A+\Delta\cdot|(0)\le C_*\int_{\mathcal{B}_1(0)}\ln|A+\Delta\cdot| dv_g.
\end{equation}
\textbf{Step 3.} We use the weak trace Jacobi inequality to get the final result. We substitute the geodesic ball $\mathcal{B}_1(0)$ by ${B}_1(0)\in \mathbb{R}^n$, and choose the smooth cut-off function $\varphi$ such that $\varphi\ge 0$ in ${B}_2(0)$, $\varphi=1$ in ${B}_1(0)$, $|D\varphi|<2$. Repeat the proof at the beginning of {Step 2.} We get for $C_*=C(n,||\phi||_{C^{0,1}({B}_2(0))})$

$$
\int_{{{B}}_2(0)}\Delta_g\ln|A+\Delta\cdot|\varphi^2 dv_g-\frac\epsilon2\int_{{{B}}_2(0)}{|\nabla_g\ln|A+\Delta\cdot||^2}\varphi^2 dv_g\ge
-C_*\int_{{B}_2(0)} \varphi^2dv_g.
$$
Thus we get the gradient estimate for $\Delta\cdot$,
\begin{equation}\label{eq_14}
\int_{{{B}}_2(0)}{|\nabla_g\ln|A+\Delta\cdot||^2}\varphi^2dv_g\le
C_*\int_{{B}_2(0)}  \varphi^2dv_g.
\end{equation}
To get the final result, we will briefly go through the proof in Wang-Yuan \cite{WY11}. By inequality (\ref{eq_12}) and Sobolev inequality (Michael-Simon \cite{MS73}, Theorem 2.1), and the formulation for the mean curvature $H$, we have for $C_0=C_*(\int_{{B}_1(0)}dv_g)^{\frac{2}{n}}$,
\begin{align*}
&\ln|A+\Delta\cdot|(0)\le C_0(\int_{{B}_2(0)}(\varphi^{n+1}\ln^{\frac12}|A+\Delta\cdot|)^{\frac{2n}{n-2}} dv_g)^{\frac{n-2}{n}}\\
&\le C_0 (\int_{{B}_2(0)}\ln|A+\Delta\cdot|\varphi^{2n}\sum^n_{i=1}g^{ii} dv_g+\int_{{B}_2(0)}|\nabla_g\ln|A+\Delta\cdot||^2\varphi^{2n} dv_g).
\end{align*}
The second term is controlled by the volume, see (\ref{eq_14}). For the first term, we use the following equality in Wang-Yuan \cite{WY11}, proof of Theorem 1.1, step 3,
$$
\sum^n_{i=1}\frac V{1+\lambda_i^2}
=c_0+c_1\sigma_1+\cdots+c_{n-1}\sigma_{n-1},
$$
where $|c_i|\le 2n$. The first term thus reduces to
$$
\int_{{B}_2(0)}\ln|A+\Delta\cdot|\varphi^{2n}\sum^n_{i=1}g^{ii} dv_g\le C(n) \int_{{B}_2(0)}\ln|A+\Delta\cdot|(1+
\sigma_1+\cdots\sigma_{n-1})\varphi^{2n} dx.
$$
Here we need the divergence structure of $\sigma_k$,
$$
k\sigma_k(D^2u)=\sum_{i,j}\frac{\partial}{\partial x_i}(\frac{\partial\sigma_k}{\partial u_{ij}}\frac{\partial u}{\partial x_j})=div({L_{\sigma_k}Du}).
$$
We denote $b=\ln|A+\Delta\cdot|$,
$$
\int_{{B}_2(0)}b\sigma_k\varphi^{2n} dx=-\frac1k\int_{{B}_2(0)}<\varphi^{2n} D b+2nb\varphi^{2n-1}D\varphi,L_{\sigma_k}D u>dx.
$$
Since $|<D b,L_{\sigma_k}Du>|\le C(n)|D u| (|\nabla_g b|^2+1)V$, by (\ref{eq_14}) we have,
$$
\int_{{B}_2(0)}b\sigma_k\varphi^{2n} dx\le C(n)||Du||_{L^\infty(B_2(0))}(\int_{{B}_2(0)}b\sigma_{k-1}\varphi^{2n-1}dx + C_*\int_{{B}_2(0)}dv_g).
$$
Notice that $b\le A+\Delta\cdot$, by iteration we get
$$
\ln|A+\Delta\cdot|(0)\le C_*(1+||Du||^n_{L^\infty(B_2(0))})(\int_{{B}_{2}(0)}dv_g)^{1+\frac2n}.
$$
Repeat the above process with $b=1$ to estimate the volume, we get
$$
\ln|A+\Delta\cdot|(0)\le C(n,||Du||_{L^\infty(B_3(0))},||\phi||_{C^{0,1} ({B}_{3}(0))}).
$$
\end{proof}
We remark that trace Jacobi inequality was newly proved for $\sigma_2$ equation in dimension 3 in Qiu \cite{Qiu17}, and in general dimensions to semi-convex solutions in Shankar-Yuan \cite{SY22}. Before, the Jacobi inequality with $\lambda_{\max}$ for special Lagrangian equation was proved in dimension 3 in Warren-Yuan \cite{WYb09}, in general dimensions in Wang-Yuan \cite{WY11}, and for Lagrangian mean curvature equation in Bhattacharya \cite{Bha20a}. The Jacobi inequality with the volume element $V$ for special Lagrangian equation was proved in Chen-Warren-Yuan \cite{CYW09}.

\subsection {Hessian estimates on supercritical phase: proof to Theorem \ref{Thm_Upbd_critical}}

\begin{proof}
We will always assume $\lambda_n<0$. Recall the following properties of equation (\ref{gslag1}) on critical and supercritical phase $\phi\ge\frac\pi2(n-2)$ (Wang-Yuan \cite{WY11}, Theorem 2.1). a) $\lambda_{n-1}\ge |\lambda_n|$; b) $\sigma_k\ge 0$, $1\le k\le n-1$.
We begin with equality (\ref{eq_4}):
\begin{equation}\label{eq_20}
\overline \Delta_g\Delta\cdot
\ge\sum^n_{\gamma=1}[\sum^n_{i=1}2\lambda_ig^{ii}g^{ii}u^2_{ii\gamma}+\sum^n_{i\neq \gamma}2(\lambda_i+\lambda_\gamma)g^{ii}g^{\gamma\gamma}u^2_{ii\gamma}]_\gamma+\Delta\phi.
\end{equation}
Let $c(\theta),C(\theta)$ be small/large constants depending on $\theta$. Notice that the supercritical phase condition implies $\lambda_{n-1}\ge \tan\theta$, $|\lambda_n|<\cot\theta$. Choose a coordinate system such that $D^2u$ is diagonal at a point $p$, redefine $u_{nn\gamma}=u_{nn\gamma}-\phi_\gamma{ g_{nn}}$ as in the convex case, and denote $h_{ii\gamma}=u_{ii\gamma}g^{ii}$, then $\sum^n_{i=1}h_{ii\gamma}=0$ at $p$. We choose a constant $\mu=10^{-2}$. If $|\lambda_n|<\frac{\mu\tan\theta}{4n}$ or $\lambda_{n-1}>\frac{4n\cot\theta}{\mu}$ , we have $\lambda_i-2n|\lambda_n|\ge (1-\frac{\mu}{2})\lambda_i$ when $i\le n-1$. Notice that if $\cot^2\theta<\frac{\mu}{4n}$ the relation holds automatically. Then
$$
\sum^n_{i=1}\lambda_ih^2_{ii\gamma}
\ge \sum^{n-1}_{i=1}(\lambda_i-2n|\lambda_n|)h^2_{ii\gamma}+|\lambda_n|h^2_{nn\gamma}\ge (1-\frac{\mu}2)\sum^{n}_{i=1}|\lambda_i|h^2_{ii\gamma},
$$
\begin{equation}\label{eq_21}
[\ ]_\gamma
\ge\sum^{n}_{i=1}2(1-\mu)
|\lambda_i|g^{ii}g^{\gamma\gamma}u^2_{ii\gamma}+ {\mu}\sum^{n}_{i=1}|\lambda_i|g^{ii}g^{ii}u^2_{ii\gamma}.
\end{equation}
Otherwise $\lambda_{n-1}\le \frac{4n\cot\theta}{\mu}$, $|\lambda_n|\ge \frac{\mu\tan\theta}{4n}$. Let $\hat\lambda_n=\tan(\arctan\lambda_n-\theta)$, $\kappa> 1$ to be chosen later. By property b) we have $\sum^{n-1}_{i=1}\lambda_i^{-1}+\hat\lambda_n^{-1}\le0$, we calculate that
\begin{align*}
\sum^n_{i=1}\lambda_ih^2_{ii\gamma}&=\sum^{n-1}_{i=1}\lambda_ih^2_{ii\gamma}+\kappa\lambda_nh^2_{nn\gamma}+(1-\kappa)\lambda_nh^2_{nn\gamma}\\
&\ge \kappa\lambda_n \sum^{n-1}_{i=1}\lambda_ih^2_{ii\gamma}(\sum^{n-1}_{i=1}\frac1{\lambda_i}+\frac1{\hat\lambda_n}+\frac1{\kappa\lambda_n}
-\frac1{\hat\lambda_n})+(1-\kappa)\lambda_nh^2_{nn\gamma}\\
 &\ge(1-\frac{\kappa\lambda_n}{\hat \lambda_n})\sum^{n-1}_{i=1}\lambda_ih^2_{ii\gamma}+(1-\kappa)\lambda_nh^2_{nn\gamma} \\
 &\ge \frac{\tan^2\theta-(\kappa-1)}{\tan^2\theta+1}\sum^{n-1}_{i=1}\lambda_ih^2_{ii\gamma}+(\kappa-1)(-\lambda_n)h^2_{nn\gamma}
 \ge \hat\epsilon(\theta)\sum^{n}_{i=1}|\lambda_i|h^2_{ii\gamma}.
\end{align*}
Let $C=C(\theta)>\frac{4n\cot\theta}{\mu}$. Since we assume that $\lambda_{n-1}\le \frac{4n\cot\theta}{\mu}$, let $m\le n-2$ be an integer such that $\lambda_m> C(\theta)$, $\lambda_{m+1}\le C(\theta)$. When $\gamma=n$, we have $\lambda_i+\lambda_\gamma\ge (1-\frac\mu2)\lambda_i$ when $i\le m$, and $\lambda_i+\lambda_\gamma\ge \tan\theta$ when $m<i<n$. From (\ref{eq_20}) we get
\begin{equation}\label{eq_22}
[\ ]_n\ge
\sum^{m}_{i=1}2(1-\mu)\lambda_ig^{ii}g^{nn}u^2_{iin}+\hat\epsilon(\theta)\sum^{n}_{i=m+1} g^{ii}g^{nn}u^2_{iin}+\hat\epsilon(\theta)\sum^{n}_{i=1}|\lambda_i|g^{ii}g^{ii}u^2_{iin}.
\end{equation}
When $\gamma<n$, let $0<\kappa\le 1$, we have
\begin{align*}
&\sum^n_{i=1}\lambda_ih^2_{ii\gamma}=\sum^{n-1}_{i\neq \gamma}\lambda_ih^2_{ii\gamma}+\kappa\lambda_\gamma h^2_{\gamma\gamma\gamma}+\lambda_nh^2_{nn\gamma}+(1-\kappa)\lambda_\gamma h^2_{\gamma\gamma\gamma}\\
&\ge (\sum^{n-1}_{i\neq\gamma}\lambda_ih^2_{ii\gamma}+\kappa\lambda_\gamma h^2_{\gamma\gamma\gamma})[1+\sum^{n-1}_{i=1}\frac{\lambda_n}{\lambda_i}+(\kappa^{-1}-1)\frac{\lambda_n}{\lambda_\gamma}]+(1-\kappa)\lambda_\gamma h^2_{\gamma\gamma\gamma}\\
 &\ge(\sum^{n-1}_{i\neq\gamma}\lambda_ih^2_{ii\gamma}+\kappa\lambda_\gamma h^2_{\gamma\gamma\gamma})[\frac{\tan\theta}{\lambda_{n-1}}-(\kappa^{-1}-1)\frac{\cot\theta}{\lambda_\gamma}]_{=\tau(\theta)}+ (1-\kappa)\lambda_\gamma h^2_{\gamma\gamma\gamma}.
\end{align*}
If $\gamma\le m$, we choose $\kappa=\frac12\mu$ then $C(\theta)$ large such that $\tau(\theta)>\frac12\frac{\tan\theta}{\lambda_{n-1}}\ge\frac{\mu}{8n\cot^2\theta}$.\\
If $\gamma>m$, we choose $\kappa=1$. In either case, we have for some $\hat\epsilon=\hat\epsilon(\theta)>0$
\begin{equation}\label{eq_23}
[\ ]_\gamma\ge
\sum^{m}_{i=1}2(1-\mu)\lambda_ig^{ii}g^{\gamma\gamma}u^2_{iin}+\hat\epsilon(\theta)\sum^{n}_{i=m+1}
g^{ii}g^{\gamma\gamma}u^2_{ii\gamma}+\hat\epsilon(\theta)\sum^{n}_{i=1}|\lambda_i|g^{ii}g^{ii}u^2_{ii\gamma}.
\end{equation}
Now similar to (\ref{eq_9}) (\ref{eq_Q_gamma}), we consider the following inequality with $\epsilon=\epsilon(n)$ small
$$
\overline\Delta_g|A+\Delta\cdot|-(1+\epsilon) \frac{|\nabla_g\Delta\cdot|^2}{A+\Delta\cdot}
\ge \sum^n_{\gamma=1}\{[\ ]_\gamma
-(1+\epsilon)\frac{g^{\gamma\gamma}}{A+\Delta\cdot}(\sum^n_{i=1}u_{ii\gamma})^2\}+\Delta \phi,
$$
and the quadratic form
\begin{equation}\label{eq_24}
Q_{\gamma}=\sum^n_{i=1}
a_iu_{ii\gamma}^2
-(1+\frac18)(\sum^n_{i=1}u_{ii\gamma})^2.
\end{equation}
For (\ref{eq_21}), we can choose $a_i=\frac{2(1-\epsilon(n))|\lambda_i| (A+\Delta\cdot)}{(1+\lambda_i^2)}$, which can be viewed as the convex case. For (\ref{eq_22}) (\ref{eq_23}), we choose $A=A(\theta)$ large, and $a_i=\min[\frac{2(1-\epsilon(n))\lambda_i (A+\Delta\cdot)}{(1+\lambda_i^2)},n^4](i\le m)$, $a_i=n^4+i(i>m)$ since then $C(\theta)\ge|\lambda_i|\ge|\lambda_n| $, $\frac{\hat\epsilon(\theta) (A+\Delta\cdot)}{(1+\lambda_i^2)}>2n^4$ for large $A(\theta)$. Apply the argument to the quadratic form (\ref{eq_matrix_1}) we have $Q_\gamma\ge 0$. Finally we take the new representation $u_{nn\gamma}=u_{nn\gamma}-\phi_\gamma{ g_{nn}}$ and the reaming term of $\Delta_g$ into account as (\ref{eq_15}) (\ref{eq_150}), we get the weak trace Jacobi inequality
\begin{equation}\label{eq_25}
\Delta_g|A+\Delta\cdot|-(1+\epsilon) \frac{|\nabla_g\Delta\cdot|^2}{A+\Delta\cdot}\ge \hat\epsilon\sum^{n}_{\gamma,i=1}|\lambda_i|g^{ii}g^{ii}u^2_{ii\gamma}- \hat C|A+\Delta\cdot| |D\phi|^2+\Delta \phi,
\end{equation}
where $\epsilon=\epsilon(\frac18)$ and $A=A(\theta),\hat C=\hat C(\theta),\hat\epsilon=\hat\epsilon(\theta)$. Now we go through the remaining steps in the proof of Theorem \ref{Thm_Upbd_n}. Here we refer to Wang-Yuan \cite{WY11}, proof to Theorem 1.1 to treat the supercritical phase case.  We come to the conclusion that
$$
\ln|A+\Delta\cdot|(0)\le C(n,\theta,|Du|_{L^\infty(B_{3}(0))},||\phi||_{C^{0,1} ({B}_{3}(0))}).
$$
Since $u+|x^2|\cot\theta$ is convex, the gradient bound follows and the proof is done.
\end{proof}

\section {interior $C^{2,\alpha}$ regularity on supercritical phase interval}


\subsection{Proof to Theorem \ref{Thm_Upbd_nGloabl}}
\begin{proof}With the aid of the interior estimates, we can prove an interior regularity result for $C^0$ viscosity solutions on the supercritical phase interval $I_1=(\frac\pi2(n-2),\frac\pi2n)$ where holds a comparison principle, which is showed in Cirant-Payne \cite{CP21}, Theorem 6.18, and see also the works of Dinew-Do-T\^o \cite{DDT19}, Harvey-Lawson \cite{HL21}. Since the phase $\phi(x)$ is Lipschitz, we approximate it by a sequence of smooth functions $\phi_{k}(x)$ satisfying the following conditions:
$$
|\phi_{k}-\phi|\le \frac1{k}, \ \phi_{k}\in [\frac\pi2(n-2)+\theta-\frac1{4k},\frac\pi2n-\frac1{4k}],
$$
where $\theta>0$ satisfies $\phi\ge\frac\pi2(n-2)+\theta$ in $B_{\frac78}(0)$, and further
$$
 ||\phi_{k}||_{C^{0,1}(B_{\frac34})}\le C(n)||\phi||_{C^{0,1}(B_{\frac34+o(1)}(0))}.
$$
Next we use the existence and uniqueness results for $C^{2,\alpha}$ solutions in Bhattacharya \cite{Bha20b}, Theorem 1.1, see also Lu \cite{Lu22}, to get approximating solutions of equation (\ref{gslag1}) with corresponding phase $\phi_k$. Specificity, we solve the following Dirichlet problem for large $k$,
$$
\left\{
\begin{array}{ll}
 F(D^2u^k)=\sum^n_{i=1}\arctan \lambda_i(D^2u^{k})=\phi_k(x)   & x\in B_{\frac34}(0),   \\
u^{k}(x)=u(x)    &  x\in \partial B_{\frac34}(0).     \\
\end{array} \right.
$$
Then there exists an unique smooth solution $u^{k}(x) $ in $ B_{\frac34}(0)$. Let $\delta>0$ small, there exists a constant $C=C(n,\theta, \mathrm{osc}_{B_{\frac34}(0)} u,||\phi||_{C^{0,1}(B_{\frac34+o(1)}(0))})$ such that
$$
F(D^2u^{k}+2\delta I_n)= \sum^n_{i=1}\arctan [\lambda_i(D^2u^{k})+2\delta]\ge \phi_k+C\delta,
$$
$$
F(D^2u^{k}-2\delta I_n)= \sum^n_{i=1}\arctan [\lambda_i(D^2u^{k})-2\delta]\le \phi_k-C\delta,
$$
where we use the interior $C^{1,1}$ estimates to $u^k$ by Theorem \ref{Thm_Upbd_critical}, the $L^\infty$ norm of $u^k$ is obtained since $u^k+|x|^2\cot\frac\theta2$ attains its maximum at the boundary when $k$ is large. In view of the discussion precedes Theorem \ref{Thm_Upbd_nGloabl}, we have $C^{2,\alpha}$ estimates for each $0<\alpha<1$,
\begin{equation}\label{eq_31}
||u^k||_{C^{2,\alpha}( B_{\frac12}(0))}\le \hat C(n,\theta,\alpha, \mathrm{osc}_{B_{\frac34}(0)} u,||\phi||_{C^{0,1}(B_{\frac34+o(1)}(0))}).
\end{equation}
Notice that $u^k$ are smooth functions, by comparison principle we have for all $k\ge k_0(C\delta)$ and $m\ge 1$
$$
u^{k}+\delta[|x|^2-(\frac34)^2]\le u^{(k+m)}\le u^{k} -\delta[|x|^2-(\frac34)^2].
$$
Then $\{u_k\}$ converges uniformly and up to a subsequence converges in $C^2$ on compact subsets of $B_{\frac34}(0)$. We claim that it converges to $u$, in particular estimate (\ref{eq_31}) holds for $u$. We need the comparison principle for supercritical phase viscosity solutions of (\ref{gslag1}). Similar to the argument for $u^{k+m}$, we have for $k\ge k_0(C\delta)$
$$
u^{k}+\delta[|x|^2-(\frac34)^2]\le u\le u^{k} -\delta[|x|^2-(\frac34)^2].
$$
\end{proof}

\section{Acknowledgements}
The author would like to express his great gratitude for professor Yu Yuan's guidance and advices on this study.
\appendix
\section {Hessian estimates in dimension two}
The two dimensional case was first proved independently by Yu Yuan. Here we prove it to show the stability of our approach. It also serves as the simplest example to make clear the trace Jacobi inequality.
\begin{theorem}\label{Thm_Upbd_2}
Let $u$ be a smooth solution to $\arctan\lambda_1+\arctan\lambda_2=\phi$ in $B_{4}(0)$. Suppose that either u is convex with $\phi(x)\in[0,\frac\pi2)$, or $\phi(x)\in[\theta,\frac\pi2)$ for some constant $\theta>0$. We have
\begin{equation}\label{interior estimate}
|D^2u(0)|\le C,
\end{equation}
where $C$ depends on $\mathrm{osc}_{B_{4}(0)} u, ||\phi||_{C^{0,1}(B_{4}(0))}$, and in the second case also on $\theta$.
\end{theorem}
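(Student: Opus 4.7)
The strategy is a dimension--two streamlining of the proofs of Theorems~\ref{Thm_Upbd_n} and~\ref{Thm_Upbd_critical}. As a first step I would differentiate $\arctan\lambda_1+\arctan\lambda_2=\phi$ twice and combine with the Codazzi--type commutation formula (\ref{eq_2}) to get, at a point $p$ where $D^2u$ is diagonal,
\[
\overline\Delta_g(\Delta u)=\sum_{\gamma=1}^{2}\Bigl[\,2\lambda_1 g^{11}g^{11}u_{11\gamma}^2+2\lambda_2 g^{22}g^{22}u_{22\gamma}^2+2(\lambda_1+\lambda_2)g^{11}g^{22}u_{12\gamma}^2\Bigr]+\Delta\phi.
\]
Following the trick around (\ref{eq_Q_gamma}), I would redefine $u_{22\gamma}:=u_{22\gamma}-\phi_\gamma g_{22}$ so that $g^{11}u_{11\gamma}+g^{22}u_{22\gamma}=0$ at $p$, then form
\[
Q_\gamma=\tfrac{2\lambda_1(A+\Delta u)}{1+\lambda_1^2}u_{11\gamma}^2+\tfrac{2\lambda_2(A+\Delta u)}{1+\lambda_2^2}u_{22\gamma}^2-\tfrac{5}{4}(u_{11\gamma}+u_{22\gamma})^2,
\]
whose non--negativity (after fixing a large $A$) is the engine of the weak trace Jacobi inequality. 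The $2\times 2$ positivity check is a direct specialization of the matrix argument in (\ref{eq_matrix_1}).

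\textbf{The two regimes.} For the convex case both $\lambda_i\ge 0$, and picking $A$ large enough that $A+\Delta u\gg C(\mathrm{osc}\,u)$ forces the relation $1-\sum a_i^{-1}|\langle L,e_i\rangle|^2\ge 0$, exactly as in Case 1 of Theorem~\ref{Thm_Upbd_n}. In the supercritical case $\phi\ge\theta$, since $n=2$ the inequality $\arctan\lambda_1\ge\theta-\arctan\lambda_2$ together with $\arctan\lambda_i\in(-\pi/2,\pi/2)$ yields the 2D analogs of properties a)--b) from Wang--Yuan: $\lambda_1\ge\tan\theta>0$ and $|\lambda_2|<\cot\theta$. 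I would then split as in (\ref{eq_21})--(\ref{eq_23}) according to whether $|\lambda_2|\le \frac{\mu\tan\theta}{4n}$ (where $\lambda_1-2|\lambda_2|\ge(1-\mu/2)\lambda_1$ is immediate) or not (where the $\kappa$--trick with $\hat\lambda_2=\tan(\arctan\lambda_2-\theta)$ gives $\sum\lambda_i h_{ii\gamma}^2\ge\hat\epsilon(\theta)\sum|\lambda_i|h_{ii\gamma}^2$). Combining $Q_\gamma\ge 0$ with the correction estimate (\ref{eq_15}) and the bound (\ref{eq_150}) on $(\Delta_g-\overline\Delta_g)(\Delta u)$ I would arrive at
\[
\Delta_g\ln(A+\Delta u)-\epsilon|\nabla_g\ln(A+\Delta u)|^2\ge \tfrac{\delta[\,]_h+\Delta\phi}{A+\Delta u}-C(n,\|\phi\|_{C^{0,1}})|D\phi|^2.
\]

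\textbf{From Jacobi inequality to pointwise bound.} With the Jacobi inequality in hand, the rest is a verbatim specialization of Steps 2--3 of Theorem~\ref{Thm_Upbd_n}: test against the radial cut--off $\varphi(x)=\int_r^{+\infty}t\,\chi(\rho-t)\,dt$ on the Lagrangian graph, integrate by parts to trade $\Delta\phi$ for a first order term $D\varphi\cdot D\phi$, and obtain the mean value bound $\ln(A+\Delta u)(0)\le C_*\int_{\mathcal B_1(0)}\ln(A+\Delta u)\,dv_g$. Michael--Simon combined with the gradient bound (\ref{eq_14}) then yields the $L^{2n/(n-2)}$ control; in $n=2$ only the $\sigma_1$--iteration (using $\sigma_1=\tfrac{1}{2}\mathrm{div}\,Du$) is needed to control the volume, and the iteration terminates in one step. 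In the supercritical branch the gradient bound on $u$ used at the end comes from convexity of $u+|x|^2\cot\theta$, exactly as at the end of Theorem~\ref{Thm_Upbd_critical}.

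\textbf{Main obstacle.} The only genuinely delicate point, just as in the $n\ge 3$ proofs, is that the Jacobi inequality contains $\Delta\phi$ at the top while the hypothesis is only $\phi\in C^{0,1}$. The entire argument hinges on being able to integrate $\Delta\phi$ by parts against the test function $\varphi$ and on the fact that the leftover quadratic $|D\phi|^2$ terms are absorbable by the positive terms generated by $Q_\gamma\ge 0$ and the cross--term estimate (\ref{eq_150}). Once the $2\times 2$ positivity of $Q_\gamma$ is verified in both regimes (which is easy since there is only one off--diagonal direction), the Sobolev--iteration machinery of Wang--Yuan runs without modification and closes the estimate.
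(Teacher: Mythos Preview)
Your plan is correct and would close the estimate, but the paper proves the Jacobi inequality by a different, more elementary route that exploits $n=2$ directly. Instead of forming $Q_\gamma$ and appealing to the matrix criterion~(\ref{eq_matrix_1}) or the $\mu$/$\kappa$ case--splitting of Theorem~\ref{Thm_Upbd_critical}, the paper uses the single linear constraint $g^{11}u_{11\gamma}+g^{22}u_{22\gamma}=\phi_\gamma$ to \emph{eliminate} $u_{22\gamma}$ entirely: both $\overline\Delta_g(\Delta u)$ and $|\nabla_g\Delta u|^2$ are rewritten as quadratic expressions in $u_{11\gamma}$ alone (plus $|D\phi|^2$ errors), and the Jacobi inequality collapses to the scalar check
\[
2(\lambda_1+\lambda_2)-(1+\epsilon+2\delta)\,\frac{(\lambda_1^2-\lambda_2^2)^2}{(A+\Delta u)(1+\lambda_1^2)}\ \ge\ c\,\lambda_1 .
\]
In the convex case this is immediate for $A$ large. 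In the supercritical case the paper does not import the $\hat\lambda_n$/$\kappa$ apparatus either: it simply observes the two--dimensional identity $\lambda_1+\lambda_2=\tan\phi\,(1-\lambda_1\lambda_2)$, which for $\phi\ge\theta$ and $\lambda_2<0$ gives $\lambda_1+\lambda_2\ge 2c(\theta)\lambda_1$ in one line. So your argument trades a short algebraic substitution for the full higher--dimensional machinery; both work, but the paper's point in presenting the appendix is precisely to show how light the $2$D computation can be. One small warning on your Step~3: the Michael--Simon exponent $2n/(n-2)$ you invoke is undefined at $n=2$; the mean--value inequality~(\ref{eq_12}) already gives the pointwise bound, and the remaining volume/integral control in two dimensions follows from the $\sigma_1$ divergence structure alone, so the Sobolev step as written in the $n\ge 3$ proof needs the obvious adjustment rather than literal transcription.
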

\begin{proof}
We only verify the weak trace Jacobi inequality, the remaining steps are the same. Cho'ose a coordinate system such that $D^2u$ is diagonal at $p\in \mathcal{M}$, by approximation we suppose that $\lambda_1\neq \lambda_2$. We have at $p$
$$
\lambda_{1,22}=u_{1122}+\frac2{\lambda_1-\lambda_2}u^2_{221}=\lambda_{2,11}+\frac2{\lambda_1-\lambda_2}(u^2_{112}+u^2_{221}),
$$
$$
g^{22}\lambda_{1,22}+g^{11}\lambda_{2,11}=g^{22}\lambda_{2,11}+g^{11}\lambda_{1,22}+2(\lambda_1+\lambda_2)(u^2_{112}+u^2_{221})g^{11}g^{22}.
$$
Differentiate the equation twice, add the above equality by $g^{11}\lambda_{1,11}+g^{22}\lambda_{2,22}$, and use the relation $g^{11}u_{11\gamma}+g^{22}u_{22\gamma}=\phi_\gamma$, we get for some $\delta$ small
\begin{align*}
&\overline \Delta_g\Delta \cdot= \sum^2_{\gamma,i=1}2\lambda_ig^{ii}g^{ii}u^2_{ii\gamma}+2(\lambda_1+\lambda_2)(u^2_{112}+u^2_{221})g^{11}g^{22}+\Delta\phi\\
&\ge\sum^2_{\gamma=1}2(\lambda_1+\lambda_2) g^{11}g^{\gamma\gamma}u_{11\gamma}^2
-\sum^2_{\gamma=1}\delta|\lambda_2|g^{11}g^{11}u_{11\gamma}^2
+\Delta\phi-\frac{C(2)}\delta|A+\Delta\cdot||D\phi|^2,
\end{align*}
\begin{align*}
|\nabla_g \Delta\cdot|^2=\sum^2_{\gamma=1}g^{\gamma\gamma}(u_{11\gamma}+u_{22\gamma})^2
&\le\sum^2_{\gamma=1}(1+\delta)g^{\gamma\gamma}
g^{11}g^{11}(\lambda_1^2-\lambda_2^2)^2 u^2_{11\gamma}\\
&+\frac{C(2)}\delta |A+\Delta\cdot|^2|D\phi|^2.
\end{align*}
We have used a basic fact that $|\lambda_2|\le \lambda_1\le A+\Delta\cdot$ when $A\ge \cot\theta$ in the supercritical phase case. Choose a small $\epsilon=10^{-2}$, we have
\begin{align*}
&\overline \Delta_g|A+\Delta  \cdot|-(1+\epsilon)\frac{|\nabla_g \Delta\cdot|^2}{A+\Delta\cdot}\\
&\ge \sum^2_{\gamma=1}[2(\lambda_1+\lambda_2)-(1+\epsilon+2\delta)\frac{(\lambda_1^2-\lambda_2^2)^2}{A+\Delta\cdot}g^{11}]
g^{11}g^{\gamma\gamma}u^2_{11\gamma}\\
&-\sum^2_{\gamma=1}\delta|\lambda_2|g^{11}g^{11}u_{11\gamma}^2
-\frac{C(2)}\delta|A+\Delta\cdot||D\phi|^2+\Delta\phi.
\end{align*}
For the convex case, we can choose $\delta=\delta(2)$ small and $A=A(2)$ large such that
\begin{equation}\label{eq_A1}
2(\lambda_1+\lambda_2)-(1+\epsilon+2\delta)\frac{(\lambda_1^2-\lambda_2^2)^2}{A+\Delta\cdot}g^{11}\ge \frac12\lambda_1.
\end{equation}
Thus we have
\begin{align*}
\overline \Delta_g|A+\Delta  \cdot|-(1+\epsilon)\frac{|\nabla_g \Delta\cdot|^2}{A+\Delta\cdot}
&\ge \sum^2_{\gamma=1}\frac14\lambda_1g^{11}g^{\gamma\gamma}u_{11\gamma}^2
+\Delta\phi-{C(2)}|A+\Delta\cdot||D\phi|^2\\
&\ge \sum^2_{\gamma,i=1}\frac1{16}|\lambda_i|g^{ii}g^{ii}u_{ii\gamma}^2+
\Delta\phi-{C(2)}|A+\Delta\cdot||D\phi|^2
\end{align*}
For the supercritical phase case with $\lambda_2<0$,
since we assume that $\phi\ge\theta$, we get $\lambda_1\ge \tan\theta$, $\lambda_2>-\cot\theta$, $\lambda_1+\lambda_2= \tan\theta(1-\lambda_1\lambda_2)\ge 2c(\theta)\lambda_1$ for some $c(\theta)>0$. We can choose $A=A(\theta)$ large, $\delta\le 10^{-2}$ small such that
$$
2(\lambda_1+\lambda_2)-(1+\epsilon+2\delta)\frac{(\lambda_1^2-\lambda_2^2)^2}{A+\Delta\cdot}g^{11}\ge \frac12(\lambda_1+\lambda_2)\ge c(\theta)\lambda_1.
$$
Next we choose $\delta=\delta(\theta)$ small such that $\delta \cot\theta<\frac12 c(\theta)\tan\theta$, we have
\begin{align*}
\overline \Delta_g|A+\Delta  \cdot|-(1+\epsilon)\frac{|\nabla_g \Delta\cdot|^2}{A+\Delta\cdot}
\ge \sum^2_{\gamma,i=1}\frac{c(\theta)}{8}|\lambda_i|g^{ii}g^{ii}u_{ii\gamma}^2
+\Delta\phi-{C(\theta)}|A+\Delta\cdot||D\phi|^2.
\end{align*}
Recall $\Delta_g=\overline\Delta_g-\sum^2_{i=1}g^{ii}\lambda_i\phi_i\partial_i$, we get the {weak Jacobi inequality} for a smaller $\epsilon$,
\begin{align*}
 \Delta_g\ln|A+\Delta  \cdot|-\epsilon{|\nabla_g|\ln|A+ \Delta\cdot||^2}\ge \sum^2_{\gamma,i=1}\frac{|\lambda_i|g^{ii}g^{ii}u_{ii\gamma}^2}{K|A+\Delta  \cdot|}
+\frac{\Delta\phi}{A+\Delta  \cdot}-{L}|D\phi|^2,
\end{align*}
where $\epsilon=\frac1210^{-2}$, $A,K,L$ depend on $2$ or $\theta$. The positive quadratic form $u^2_{\clubsuit\clubsuit\spadesuit}$ is to balance the extra terms raising from $\Delta \phi$, just as in the proof of Theorem \ref{Thm_Upbd_n}, Step 2, 3. The remaining steps are the same.

\end{proof}


\end{document}